\definecolor{labelkey}{rgb}{0.6,0,1}
\theoremstyle{plain}
\newtheorem{theorem}{Theorem}[section]
\theoremstyle{definition}
\newtheorem{definition}[theorem]{Definition}
\newtheorem{test}[theorem]{Test}
\def\bhyp#1{\begin{equation}\label{#1}\begin{array}{c}}
\def\ehyp{\end{array}\end{equation}}
\newcounter{cst}
\theoremstyle{remark}
\newtheorem{remark}[theorem]{Remark}
\numberwithin{equation}{section}
\numberwithin{figure}{section}
\newcommand{\RR}{{\mathbb R}}
\def\O{\Omega}
\def\dsp{\displaystyle}
\def\disc{{\mathcal D}}
\def\mesh{{\mathcal M}}
\def\edges{{\mathcal E}}
\def\edge{\sigma}
\newcommand{\edgescv}{{{\edges}_K}}  
\def\dr{\partial}
\newcommand{\centeredge}{\overline{x}_\edge} 
\newcommand{\cK}{{\mathcal K}}
\newcommand{\x}{\pmb{x}}
\newcommand{\cA}{{\mathcal A}}
\newcommand{\cB}{{\mathcal B}}
\newif\ifcorr\corrtrue
\definecolor{violet}{rgb}{0.580,0.,0.827}
\def\bo{{\boldsymbol \omega}}
\newcommand{\ud}{\, \mathrm{d}} 
\def\div{\mathop{\rm div}}
\def\ini{\mathop{\rm ini}}
\title{A General Error Estimate For Parabolic Variational Inequalities}
\author{Yahya Alnashri}
\address[Yahya Alnashri]{Department of Mathematics, Al-Qunfudah University College, Umm Al-Qura University, Saudi Arabia}
\email{yanashri@uqu.edu.sa}
\subjclass[2010]{35J87, 65N12, 76S05}
\keywords{Parabolic variational inequalities, obstacle problem, gradient discretisation, gradient schemes, error estimates, convergence, finite volume methods, hybrid mimetic mixed methods.}
\date{\today}
\begin{document}
\newcommand{\subscript}[2]{$#1 _ #2$}

\begin{abstract}
The gradient discretisation method (GDM) is a generic framework designed recently, as a discretise in spatial space, to partial differential equations. This paper aims to use the GDM to establish a first general error estimate for numerical approximations of parabolic obstacle problems. This gives the convergence rates of several well--known conforming and non conforming numerical methods. Numerical experiments based on the hybrid finite volume method are provided to verify the theoretical results.      
\end{abstract}

\maketitle


\section{Introduction}
\par Parabolic variational inequalities (PVIs) appear in different applications in porous media and physic. Specifically, semipermeable membrane including osmosis phenomenon and problems concerning the control of temperature at thermal boundaries \cite{N-21-Louis,A4}. They may also be employed for studying a problem arising in financial mathematics \cite{psi-finance}. We consider in this paper a parabolic obstacle problem,
\begin{subequations}\label{pvi-obs}
\begin{align}
(\partial_t \bar u-\div(\Lambda(\x)\nabla\bar u)-f)(\bar u-\psi)=0 &\mbox{\quad in $\Omega\times(0,T)$,} \label{pvi-obs1}\\
\partial_t \bar u-\div(\Lambda(\x)\nabla\bar u)\geq f &\mbox{\quad in $\Omega\times(0,T)$,} \label{pvi-obs2}\\
\bar u \geq \psi &\mbox{\quad in $\Omega\times(0,T)$,} \label{pvi-obs3}\\
\bar{u} =0 &\mbox{\quad on $\partial\O\times(0,T)$,} \label{pvi-obs4}\\
\bar u(\x,0)=u_{\rm ini}&\mbox{\quad in $\Omega\times\{0\}$},
\end{align}
\end{subequations}

In what follows, let $[0,T]\subset \RR$ and $\O \subset \RR^d$ $(d=1,2,3)$ be a bounded connected open set. The assumptions on the data in Problem \eqref{pvi-obs} are the following:
\begin{equation}\label{asm-2}
\begin{aligned}
&\mbox{\textbullet\ the domain $\O$ has a Lipschitz boundary and $T>0$},\\
&\mbox{\textbullet\ $\Lambda:\O \to \mathbb M_d(\RR)$ is a measurable function ($\mathbb M_d(\RR)$ is the set of $d \times d$ matrices)}\\
&\exists\; \underline \lambda, \overline \lambda >0 \mbox{ s.t. for a.e. }\x \in \O,
\mbox{ $\Lambda(\x)$ is symmetric with eigenvalues in $[\underline \lambda, \overline \lambda]$, }\\
&\mbox{\textbullet\ the initial solution $u_{\rm ini} \in L^2(\O)$},\\
& \mbox{\textbullet\ the function $f\in L^2(\O\times(0,T))$ and the obstacle function $\psi \in L^2(\O)$}.
\end{aligned}
\end{equation}

The closed convex independent and dependent time sets are defined by
\begin{subequations}\label{convex-set}
\begin{align}
&\cK:=\{ v \in H_0^1(\O)\; : \; v(\x) \geq \psi(\x)\; \mbox{ for a.e. } \x \in \O \},\\
&\mathbb K:=\{ v\in L^2(0,T;H_0^1(\O)):\; v(t)\in \cK \mbox{ for a.e. } t\in [0,T] \}.
\end{align}
\end{subequations}

Indeed the set $\cK$ contains $\psi^+$ (it belongs to $H_0^1(\O)$) and thus the set $\mathbb K$ contains the constant in time function $t \mapsto \psi^+$. Under the above assumptions, the weak solution to \eqref{pvi-obs} is  
\begin{equation}\label{obs-lineaer}
\left\{
\begin{array}{ll}
\dsp\mbox{find } \bar u\in \mathbb{K}\cap C^0([0,T];L^2(\O)),\; \bar u(\cdot,0)=u_{\rm ini},\; \dr_t \bar u\in L^2(0,T;L^2(\O)) \mbox{ and }\\
\dsp\int_0^T\dsp\int_\O \partial_t \bar u(\x,t) (u(\x,t)-v(\x,t)) \ud \x \ud t\\
+\dsp\int_0^T\int_\O \Lambda(\x)\nabla\bar u(\x,t) \cdot \nabla(\bar u-v)(\x,t)\ud \x \ud t\\
[1em]
\leq\dsp\int_0^T \int_\O f(\x,t)(\bar u(\x,t)-v(\x,t))\ud \x \ud t, \quad \mbox{ for all } v\in \mathbb K.
\end{array}
\right.
\end{equation}

\par The theoretical results concerning the existence and uniqueness of the solution, as well as its regularity, to the linear PVIs can be found in foundational studies such as \cite{ex-pvi-soln,ex-pvi-soln-2,reg-pvi-soln,reg-pvi-soln2}. An overview of numerical studies for PVIs be found in Glowinski's et al. monograph \cite{G1}.

\par A number of numerical methods have been proposed for approximation of variational inequality problems; finite elements, finite difference methods, finite volume methods, Monte Carlo methods, and non conforming finite elements methods among them. In \cite{P7}, a discretisation framework, using the backwards Euler and Galerkin methods, is developed for the parabolic Signorini problem. Regarding $\mathbb P1$ finite element methods for parabolic obstacle problem, we refer the reader to \cite{P6,P10,P16,P8,P5,P1,P2}. The $L^\infty$--convergence and the error estimate for $\mathbb P1$ finite element method, applied on triangular meshes with acute angles, are obtained in \cite{P5} under regularity assumptions on the solution ($\partial _t \bar u$ and $\Delta \bar u$ in $L^\infty(\O\times(0,T)))$. \cite{P1} provides a posteriori error estimate of order $\mathcal O (h+\tau)$, where $\tau$ is the time step, for linear finite element method for the parabolic obstacle problem, provided that the initial solution $\bar u_0$ is smooth. \cite{1st-FEM-pvi} establishes the convergence analysis of a finite element method for PVIs with $\psi=0$ and it is generalised to a general function $\psi \in H^2(\O)$ in \cite{2nd-FEM-pvi}. Berton and Eymard \cite{FVM-pvi} use upwind implicit finite volume scheme to approximate PVIs motived by American options contract. 

\par There are a number of an posteriori and a priori error analysis available for the PVIs. A priori error estimates for linear finite element method is driven in \cite{N-pvi-4,N-pvi-20,N-pvi-34} with zero obstacle and the time derivative of continuous solution is a function in $L^2(0,T;H^1(\O))$. \cite{Crouzeix-Raviart-pvi} designs and analyses a Crouix--Raviart finite element method for the model with a non affine obstacle. In this analysis, an order $\mathcal O(h+\tau)$ for the $L^2(0,T;H^1(\O))$ error is proved under an assumption $\delta_t \bar u \in L^2(0,T;L^2(\O))$.

\par The goal of this contribution is to afford a first simpler general error estimate for the approximation of parabolic obstacle problem by conforming and non conforming methods. For this purpose, we consider using the gradient discretisation method (GDM) for the approximation of Model \eqref{pvi-obs}. The GDM is a generic tool to provide a unified numerical analysis to different spatial partial differential equations \cite{S1}. The analysis obtained by the GDM applies to various methods; conforming and non conforming finite elements, finite volumes, for instance.  

\par This paper is organised as follows. Section \ref{sec-DS} introduces the discrete elements and propose the discrete scheme for the evolution model \eqref{pvi-obs}. Section \ref{sec-linear} states and proves the novel results (Theorem \ref{theorem-error}) concerning general error estimates, which are new since they apply to several methods included in the GDM. Since we do not deal here with a specific scheme, the approach used in previous studies cannot be efficient in obtaining our results. Instead, we rely on our technique used in \cite{AD14} to deal with inequalities besides developing a similar technique as in \cite{D2017-PPDE}. Section \ref{test} is devoted to numerical experiments to demonstrate the generic analysis results. We perform the Hybrid Mimetic Mixed method on different types of meshes, including very distorted ones.

\section{Discrete Setting}\label{sec-DS}
We first introduce the basic discrete elements (called gradient discretisation), consisting of discrete space and operators. We then construct the approximate numerical scheme (called gradient scheme).

\begin{definition}[{\bf Gradient discretisation}]\label{def-gd-pvi-obs}
Let $\O$ be an open subset of $\RR^d$ (with $d=1,2,3$) and $T>0$. A space time gradient discretisation $\disc$ for the obstacle problem with homogeneous Dirichlet boundary conditions is a family $\disc=(X_{\disc,0},\Pi_\disc,\nabla_\disc,J_\disc, (t^{(n)})_{n=0,...,N})$, where:
\begin{enumerate}
\item The set $X_{\disc,0}$ of discrete unknowns is a finite-dimensional vector space over $\RR$, taking into account the zero boundary condition \eqref{pvi-obs4},
\item $\Pi_\disc : X_{\disc,0} \to L^2(\O)$ is a linear mapping, called the function reconstruction operator,
\item $\nabla_\disc : X_{\disc,0} \to L^2(\O)^d$ is a linear mapping, called the gradient reconstruction operator, and must be defined so that $\| \nabla_\disc\cdot ||_{L^2(\O)^d}$ a norm on $X_{\disc,0}$. 
\item $J_\disc: \cK \to \cK_\disc$ is a linear and continuous interpolation operator for the
 initial conditions, where $\cK_\disc:=\{ v\in X_{\disc,0} :\; \Pi_\disc v\geq \psi, \mbox{ for a.e. in } \O \}$ and $\cK$ is the set in which the continuous solution belongs to, 
\item $t^{(0)}=0<t^{(1)}<....<t^{(N)}=T$.
\end{enumerate}
\end{definition}

Let us introduce some notations to define the space--time reconstructions $\Pi_\disc v: \O\times[0,T]\to \RR$, and $\nabla_\disc v: \O\times[0,T]\to \RR^d$, and the discrete time derivative $\delta_\disc v : (0,T) \to L^2(\O)$, for $v=(v^{(n)})_{n=0,...,N} \in X_{\disc,0}^{N}$. 

For a.e $\x\in\O$, for all $n\in\{0,...,N-1 \}$ and for all $t\in (t^{(n)},t^{(n+1)}]$, let
\begin{equation*}
\begin{split}
&\Pi_\disc v(\x,0)=\Pi_\disc v^{(0)}(\x), \quad \Pi_\disc v(\x,t)=\Pi_\disc v^{(n+1)}(\x),\\
&\nabla_\disc v(\x,t)=\nabla_\disc v^{(n+1)}(\x).
\end{split}
\end{equation*}
Set $\delta t^{(n+\frac{1}{2})}=t^{(n+1)}-t^{(n)}$ and $\delta t_\disc=\max_{n=0,...,N-1}\delta t^{(n+\frac{1}{2})}$, to define
\begin{equation*}
\delta_\disc v(t)=\delta_\disc^{(n+\frac{1}{2})}\varphi:=\frac{\Pi_\disc(v^{(n+1)}-v^{(n)})}{\delta t^{(n+\frac{1}{2})}}.
\end{equation*}

\begin{definition}[Gradient scheme]\label{def-gs-obs} The gradient scheme for Problem \eqref{obs-lineaer} is to find sequences $u=(u^{(n)})_{n=0,...,N} \subset \cK_\disc$, such that $u^{(0)}=J_\disc u_{\rm ini}\in \cK_\disc$ and for all $n=0,...,N-1$,
\begin{equation}\label{gs-pvi-obs}
\begin{array}{ll}
\dsp\int_\O \delta_\disc^{(n+\frac{1}{2})}u(\x)\, \Pi_\disc(u^{(n+1)}(\x)-v(\x)) \ud \x\\
+\dsp\int_\O \Lambda(\x)\nabla_\disc u^{(n+1)}(\x)\cdot\nabla_\disc(u^{(n+1)}-v)(\x) \ud \x\\
\leq \frac{1}{\delta t^{(n+\frac{1}{2})}}\dsp\int_{t^{(n)}}^{t^{(n+1)}}\int_\O f(\x,t)\Pi_\disc(u^{(n+1)}-v)(\x) \ud \x \ud t,\; \mbox{for all } v\in \cK_\disc.
\end{array}
\end{equation}
\end{definition}

\begin{remark}[existence of solution]
Note that at any time step $(n+1)$, we need to solve a gradient scheme for a linear elliptic variational inequality: setting $\alpha=\frac{1}{\delta t^{(n+\frac{1}{2})}}$, find $u^{(n+1)} \in \cK_\disc$, such that for all $v\in \cK_\disc$,
\begin{equation}\label{ch5-new-ellptc}
b(u^{(n+1)},u^{(n+1)}-v) \leq L(u^{(n+1)}-v),
\end{equation}
with the bilinear form $b(v,w)$ and the linear form $L(w)$ respectively defined by 
\[
b(v,w)=\alpha \dsp\int_\O \Pi_\disc v  \Pi_\disc w \ud \x +\int_\O \nabla_\disc v \cdot \nabla_\disc w \ud \x,\; \mbox{ for all } v,w \in \cK_\disc \quad \mbox{ and }
\]
\[L(w)=\dsp\int_\O f \Pi_\disc w \ud \x + \alpha \int_\O \Pi_\disc u^{(n)} \Pi_\disc w \ud \x,\; \mbox{ for all } w \in \cK_\disc.\]
The assumptions for Stampacchia's theorem can easily be verified, and therefore there exists a unique weak solution to \eqref{ch5-new-ellptc}. This leads to the existence and uniqueness of the solution to \eqref{gs-pvi-obs}.
\end{remark}

We defined in \cite{AD14} three parameters to measure the quality of gradient schemes for elliptic variational inequalities. We still use these quantities to establish error estimates of the gradient schemes for PVIs. For the sake of completeness, we recall them as follows:

\begin{equation}\label{corc}
C_\disc =  {\dsp \max_{v \in X_{\disc,0}\setminus\{0\}}\frac{\|\Pi_\disc v\|_{L^2(\O)}}{\|\nabla_\disc v\|_{L^2(\O)^{d}}}},
\end{equation}
\begin{equation}\label{consist}
\begin{split}
&S_\disc : \cK \to [0, +\infty),\\
&\forall \varphi \in \cK, \quad S_\disc(\varphi)=\min_{v\in \cK_\disc\setminus\{0\}} \| \Pi_\disc v - \varphi \|_{L^2(\O)} 
+ \| \nabla_\disc v - \nabla \varphi \|_{L^2(\O)^{d}},
\end{split}
\end{equation}
\begin{equation}\label{conformityobs}
\begin{split}
&W_\disc : H_{\rm{div}}(\O)= \{\bo \in L^{2}(\O)^{d}{\;:\;} {\rm div}\bo \in L^2(\O)\} \to [0, +\infty)\\
&\forall \bo \in H_{\mathrm{div}}(\O),\\
&W_\disc(\bo)=
 \sup_{v\in X_{\disc,0}\setminus \{0\}}\frac{1}{\|\nabla_\disc v\|_{L^2(\O)^d}} \Big|\int_{\O}(\nabla_\disc v\cdot \bo + \Pi_\disc v \cdot\mathrm{div} (\bo)) \ud \x
\Big|.
\end{split}
\end{equation}

As mentioned previously, these parameters play an important role in obtaining error estimates and their corresponding rates. We apply the function $S_\disc$ and $W_\disc$ to the continuous solution and its gradient, respectively. \cite{S1} describes the relation between the functions $S_\disc$ and $W_\disc$ and the mesh size for mesh--based gradient discretisations for PDEs. The proof of such a link can be easily transferable to the above setting of gradient discretisations for variational inequalities, and gives 
\begin{subequations}\label{eq-h-disc}
\begin{align}
&S_\disc(\varphi)\leq h_\disc || \varphi ||_{H^2(\O)},\quad \forall \varphi\in H^2(\O)\cap \cK_\disc,\label{eq-s}\\
&W_\disc(\varphi)\leq h_\disc || \varphi ||_{H^1(\O)^d},
\quad \forall \varphi\in H^1(\O)^d,\label{eq-w}
\end{align}
\end{subequations}
where $h_\disc$ is the space size of the space time gradient discretisation defined by
\[
h_\disc=\max\Big( \dsp\sup_{\varphi \in (H^2(\O) \cap \cK)\setminus \{0\}}\frac{S_\disc(\varphi)}{|| \varphi ||_{H^2(\O)}} , 
\sup_{\varphi \in H^1(\O)^d\setminus \{0\}}\frac{W_\disc(\varphi)}{|| \varphi ||_{H^1(\O)^d}}
  \Big).
\]

\section{Main results}\label{sec-linear}
We present here our main error estimates for the gradient schemes approximations of our problem. In what follows, we denote by $I_\disc^0$ the error resulting from interpolation of the initial condition, which is given by 
\begin{equation}\label{eq-ID}
I_\disc^0=\Big\| u_{\ini}-\Pi_\disc J_\disc u_{\ini} \Big\|_{L^2(\O)}.
\end{equation}

With putting $\bar u^{(0)}=\bar u(0)$, we define the averaging over time in $(t^{(n)}, t^{(n+1)})$ as, for $n \in \{ 0,...,N-1 \}$, 
\begin{equation}\label{eq-g}
Z^{(n+1)}(\x)=\frac{1}{\delta t^{(n+\frac{1}{2})}}\int_{t^{(n)}}^{t^{(n+1)}}Z(\x,t) \ud t,\quad \mbox{ where $Z=f$, $\bar u$ or $\dr_t\bar u$.} 
\end{equation}

In general, obtaining error estimates hinges on finding a proper interpolant to plug the exact solution in the approximate scheme. In the case of parabolic problem, the interpolant must enjoy two properties; linearity and providing a better approximation. We assume here that there exists a linear continuous interpolant $P_\disc : \cK \to \cK_\disc$ and $C_P>0$ not depending on $\disc$, such that,  
\begin{equation}\label{eq-error-assumption}
|| \Pi_\disc P_\disc \varphi-\varphi ||_{L^2(\O)}^2
+|| \nabla_\disc P_\disc \varphi-\nabla\varphi ||_{L^2(\O)^d}^2
\leq C_P S_\disc(\varphi).
\end{equation}
We denote by $e_\disc^P$ the error corresponding to the interpolation of the exact solution, defined by 
\begin{equation}\label{eq-err-PD}
e_\disc^P=|| \bar u^{(n+1)}-\Pi_\disc\bar u(t^{(n+1)}) ||_{L^2(\O)}.
\end{equation}


\begin{theorem}[Error estimate]\label{theorem-error}
Let assumptions \ref{asm-2} hold and $\disc$ be a gradient discretisation. Let $u$ be the solution to the gradient scheme \eqref{gs-pvi-obs} and assume that the problem \eqref{obs-lineaer} has a solution $ \bar u \in W^{1,\infty}(0,T;H^2(\O))$. Furthermore, assume that there exists a linear continuous interpolant $P_\disc$ satisfying \eqref{eq-error-assumption}. Then there exists constants $A,\; B,\; C_F\geq 0$, depending only on $\bar u$, $\O$, $C_\disc$, $f$, $C_P$ and $T$ such that 
\begin{subequations}\label{eq-error}
\begin{align}
&\max_{t\in[0,1]}|| \Pi_\disc u(\cdot,t)-\bar u(\cdot,t) ||_{L^2(\O)}
\leq A(\delta t_\disc+h_\disc+I_\disc^0)+\left(\dsp\sum_{n=0}^{m-1}\delta t^{(n+\frac{1}{2})}C_F e_\disc^P \right)^{\frac{1}{2}},
\\
&|| \nabla_\disc u-\nabla\bar u ||_{L^2(\O\times(0,T))^d}
\leq B(\delta t_\disc+h_\disc+I_\disc^0)+\dsp\sum_{n=0}^{m-1}\delta t^{(n+\frac{1}{2})}C_F e_\disc^P,
\end{align}
\end{subequations}
where $I_\disc^0$ and $e_{P_\disc}$ are the errors due to the interpolation of the initial condition and the exact solutions defined by \eqref{eq-ID} and \eqref{eq-err-PD}, respectively.  
\end{theorem}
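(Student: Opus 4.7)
The plan is to combine the discrete inequality \eqref{gs-pvi-obs}, tested by the interpolant of the exact solution, with the strong form of the parabolic obstacle problem, tested by the reconstructed discrete solution; this will produce an inequality in which the error norms appear on the left-hand side with a favourable sign.

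The first ingredient is the discrete scheme with test $v = P_\disc \bar u^{(n+1)} \in \cK_\disc$; this is admissible because $\bar u^{(n+1)} \in \cK$ and $P_\disc$ maps $\cK$ into $\cK_\disc$. The second ingredient is the strong pointwise form of the PVI. The regularity $\bar u \in W^{1,\infty}(0,T;H^2(\O))$ gives $\dr_t \bar u \in L^\infty(0,T;L^2(\O))$ and $\div(\Lambda \nabla \bar u) \in L^\infty(0,T;L^2(\O))$, so $\mu := \dr_t \bar u - \div(\Lambda \nabla \bar u) - f$ is well-defined in $L^2(\O\times(0,T))$, with $\mu \geq 0$ and $\mu (\bar u - \psi) = 0$ a.e. Since $u^{(n+1)} \in \cK_\disc$ forces $\Pi_\disc u^{(n+1)} \geq \psi$ a.e., one deduces $\mu\,(\Pi_\disc u^{(n+1)} - \bar u) = \mu\,(\Pi_\disc u^{(n+1)} - \psi) \geq 0$ pointwise. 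This yields the ``reversed'' inequality
\begin{equation*}
\int_\O \dr_t \bar u \, (\Pi_\disc u^{(n+1)} - \bar u) \ud \x - \int_\O \div(\Lambda \nabla \bar u)(\Pi_\disc u^{(n+1)} - \bar u) \ud \x \geq \int_\O f (\Pi_\disc u^{(n+1)} - \bar u) \ud \x.
\end{equation*}
Integrating by parts the divergence term against $\Pi_\disc u^{(n+1)}$ via \eqref{conformityobs} applied to $\bo = \Lambda \nabla \bar u \in H_{\mathrm{div}}(\O)$ introduces a remainder controlled by $W_\disc(\Lambda \nabla \bar u)\,\|\nabla_\disc u^{(n+1)}\|_{L^2(\O)^d}$.

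Integrating the reversed inequality over $(t^{(n)}, t^{(n+1)})$ and subtracting from the time-integrated discrete scheme, the $f$-terms collapse into the residual $\int_{t^{(n)}}^{t^{(n+1)}}\!\int_\O f(\bar u - \Pi_\disc P_\disc \bar u^{(n+1)}) \ud \x \ud t$. The diffusive terms regroup through the identity
\begin{equation*}
\Lambda \nabla_\disc u^{(n+1)} \cdot (\nabla_\disc u^{(n+1)} - \nabla_\disc P_\disc \bar u^{(n+1)}) - \Lambda \nabla \bar u \cdot (\nabla_\disc u^{(n+1)} - \nabla \bar u) = \Lambda (\nabla_\disc u^{(n+1)} - \nabla \bar u) \cdot (\nabla_\disc u^{(n+1)} - \nabla \bar u) - \Lambda \nabla_\disc u^{(n+1)} \cdot (\nabla_\disc P_\disc \bar u^{(n+1)} - \nabla \bar u),
\end{equation*}
yielding the coercive term $\underline\lambda \|\nabla_\disc u^{(n+1)} - \nabla \bar u\|_{L^2(\O)^d}^2$ on the left and a residual controlled by \eqref{eq-error-assumption} on the right. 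The time-derivative terms are treated analogously: after adding and subtracting $\delta_\disc P_\disc \bar u$ and the time-average of $\dr_t \bar u$, the algebraic identity $2a(a - b) = a^2 - b^2 + (a - b)^2$ is used to expose a telescoping structure.

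Summing over $n = 0, \ldots, m-1$ causes the time-derivative contribution to telescope into $\tfrac{1}{2}\|\Pi_\disc u^{(m)} - \Pi_\disc P_\disc \bar u^{(m)}\|_{L^2(\O)}^2$ up to an initial error absorbed into $I_\disc^0$ via \eqref{eq-ID} and \eqref{eq-error-assumption}. Young's inequality is then applied to absorb the gradient-error square on the left. The remaining residuals are bounded using \eqref{eq-s}--\eqref{eq-w} (producing the $h_\disc$ contributions), the time-averaging error of $\dr_t \bar u$ and $f$ via the $W^{1,\infty}$ regularity (producing the $\delta t_\disc$ contributions), and the interpolation and initial-condition errors $e_\disc^P$ and $I_\disc^0$. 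This delivers the two estimates in \eqref{eq-error}, the first being an $L^\infty(0,T;L^2(\O))$ bound and the second an $L^2(0,T;L^2(\O)^d)$ gradient bound. The main obstacle lies in the careful bookkeeping of the time-discrete derivative terms: the mismatch between the pointwise-in-time interpolant $P_\disc \bar u(t^{(n+1)})$ (giving rise to $e_\disc^P$) and the time-averaged quantities $\bar u^{(n+1)}$ must be handled so that the telescoping structure survives and the final bound depends only on $\delta t_\disc$, $h_\disc$, $I_\disc^0$, and $e_\disc^P$.
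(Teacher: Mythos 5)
Your proposal is correct and follows essentially the same route as the paper: test the scheme with $v=P_\disc\bar u(t^{(n+1)})$, exploit the strong complementarity form $\mu\ge 0$, $\mu(\bar u-\psi)=0$ together with $\Pi_\disc u^{(n+1)}\ge\psi$ to reverse the continuous inequality (this is exactly how the paper's term $E_\disc^{(n+1)}\le C_F e_\disc^P$ arises), absorb the conformity defect via $W_\disc(\Lambda\nabla\bar u)$, and telescope the time-derivative term with $a(a-b)\ge\tfrac12(a^2-b^2)$ and Young's inequality before concluding by triangle inequalities with $S_\disc$, $I_\disc^0$ and the $W^{1,\infty}$ time-regularity. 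The only differences are cosmetic (you keep $\Lambda$ explicit and phrase the complementarity through the multiplier $\mu$ rather than inserting $\bar u^{(n+1)}$ into $\psi-\Pi_\disc v$), so no further comparison is needed.
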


\begin{proof}
The proof is inspired from \cite{S1}. In this proof, the constants $C_i,\; i=1,2,...,9$ depend on $\O,\; \bar u,\; C_\disc,\; f$. Since $\nabla\bar u: [0,T] \to L^2(\O)^d$ 
is Lipschitz--continuous, we have, by applying \eqref{eq-error-assumption} and using \eqref{eq-s}, 
\begin{equation}\label{eq-proof1}
\begin{aligned}
\Big\| \nabla\bar u^{(n+1)}&-\nabla_\disc P_\disc \bar u(t^{(n+1)}) \Big\|_{L^2(\O)^d}\\
&\leq \Big\| \nabla\bar u^{(n+1)}-\nabla\bar u(t^{n+1}) \Big\|_{L^2(\O)^d} +S_\disc(\bar u(t^{n+1}))\leq C_1(\delta t_\disc+h_\disc).
\end{aligned}
\end{equation}
Since $|| \dr_t \bar u^{(n+1)} ||_{H^2(\O)}$ is bounded independently of $n$, we can use \eqref{eq-error-assumption} with $\varphi=\dr_t\bar u^{(n+1)}=\frac{\bar u(t^{(n+1)})-\bar u(t^{(n)})}{\delta t^{(n+\frac{1}{2})}}$. Then, by the linearity of $P_\disc$ and \eqref{eq-s}, we obtain
\begin{equation}\label{eq-proof2}
\Big\| \frac{\Pi_\disc P_\disc\bar u(t^{n+1})-\Pi_\disc P_\disc \bar u(t^{(n)})}
{\delta t^{n+\frac{1}{2}}}
-\partial_t \bar u^{(n+1)} \Big\|_{L^2(\O)}
\leq C_2h_\disc.
\end{equation}
Note that $\nabla\bar u^{(n+1)} \in H_{\div}$. Inequality \eqref{conformityobs} with $\bo=\nabla\bar u^{(n+1)}$ gives
\begin{equation}\label{eq-limi-conf}
\begin{aligned}
\forall w\in X_{\disc,0},\; \int_\O \Big(\Pi_\disc w(\x)\div(\nabla\bar u^{(n+1)})(\x)
&+\nabla\bar u^{(n+1)}\cdot \nabla_\disc w(\x) \Big) \ud \x \\
&\leq C_3h_\disc || \nabla_\disc w ||_{L^2(\O)^d}.
\end{aligned}
\end{equation}
Regularity assumptions on the solution $\bar u$ show that \eqref{pvi-obs1} holds a.e. in space and time. Averaging over time in $(t^{(n)},t^{(n+1)})$ leads to $\dr_t \bar u^{(n+1)}-f^{(n+1)} \leq \div(\nabla\bar u^{(n+1)})$. Since $u \in \cK_\disc$, we get $\int_\O (\Pi_\disc u^{(n+1)}-\psi)(f^{(n+1)}-\div(\nabla\bar u^{(n+1)})-\dr_t\bar u^{(n+1)}) \ud x \leq 0$. Hence, for any $v\in \cK_\disc$, we can write
\begin{align*}
\int_\O \Pi_\disc(u^{(n+1)}(\x)&-v(\x))\div(\nabla\bar u^{(n+1)}(\x)) \ud \x\\
&\leq\int_\O (\psi(\x)-\Pi_\disc v(\x))(f^{(n+1)}(\x)-\div(\nabla\bar u^{(n+1)}(\x))-\dr_t\bar u^{(n+1)}(\x))\\
&-\int_\O \Pi_\disc(u^{(n+1)}(\x)-v(\x))(f^{(n+1)}(\x)-\dr_t\bar u^{(n+1)}(\x)) \ud \x,
\end{align*}
which gives with introducing $\bar u^{(n+1)}$ in the first term
\begin{align*}
\int_\O &\Pi_\disc(u^{(n+1)}(\x)-v(\x))\div(\nabla\bar u^{(n+1)}(\x)) \ud \x\\
&\leq \int_\O (\psi(\x)-\bar u^{(n+1)}(\x))(f^{(n+1)}(\x)-\div(\nabla\bar u^{(n+1)}(\x))-\dr_t\bar u^{(n+1)}(\x)) \ud \x
\\
&+\int_\O (\bar u^{(n+1)}(\x)-\Pi_\disc v(\x))(f^{(n+1)}(\x)-\div(\nabla\bar u^{(n+1)}(\x))-\dr_t\bar u^{(n+1)}(\x)) \ud \x\\
&-\int_\O \Pi_\disc(u^{(n+1)}(\x)-v(\x))(f^{(n+1)}(\x)-\dr_t\bar u^{(n+1)}(\x)) \ud \x.
\end{align*}
Since the first term on the R.H.S is zero, this inequality can be written as
\begin{align*}
\int_\O \Pi_\disc(v(\x)&-u^{(n+1)}(\x))\div(\nabla\bar u^{(n+1)}(\x)) \ud \x\\
&\geq
\int_\O \Pi_\disc(v(\x)-u^{(n+1)}(\x))(\dr_t\bar u^{(n+1)}(\x)-f^{(n+1)}(\x)) \ud \x\\
&+\dsp\int_\O(\bar u^{(n+1)}(\x)-\Pi_\disc v(\x))(f^{(n+1)}(\x)-\div(\nabla\bar u^{(n+1)})(\x)-\dr_t\bar u^{(n+1)}(\x)) \ud \x.
\end{align*}
Consider $w=v-u^{(n+1)}\in X_{\disc,0}$ in \eqref{eq-limi-conf}. Employ the above inequality to replace $\div(\nabla\bar u^{(n+1)})$ in the left-hand side to find
\begin{align*}
\int_\O \Pi_\disc(v(\x)&-u^{(n+1)}(\x))(\dr_t\bar u^{(n+1)}(\x)-f^{(n+1)}(\x)) \ud \x\\
&+\int_\O \nabla_\disc(v(\x)-u^{(n+1)}(\x))\cdot \nabla\bar u^{(n+1)}(\x)) \ud \x\\
&\leq C_3h_\disc || \nabla_\disc (v-u^{(n+1)}) ||_{L^2(\O)^d}\\
&+\dsp\int_\O(\bar u^{(n+1)}(\x)-\Pi_\disc v(\x))(f^{(n+1)}(\x)-\div(\nabla\bar u^{(n+1)})(\x)-\dr_t\bar u^{(n+1)}(\x)) \ud \x.
\end{align*}
Therefore, we use the fact that $u$ is the solution to the gradient scheme \eqref{gs-pvi-obs} to arrive at
\begin{equation}\label{eq-proof3}
\begin{aligned}
\int_\O \Pi_\disc(v(\x)&-u^{(n+1)}(\x))(\partial_t\bar u^{(n+1)}(\x)-\delta_\disc^{(n+\frac{1}{2})}u(\x)) \ud \x\\
&+\int_\O \nabla_\disc (v(\x)-u^{(n+1)}(\x))\cdot (\nabla\bar u^{(n+1)}(\x)-\nabla_\disc u^{(n+1)}(\x)) \ud \x\\
&\leq C_3h_\disc || \nabla_\disc (v-u^{(n+1)}) ||_{L^2(\O)^d}\\
&+\dsp\int_\O(\bar u^{(n+1)}(\x)-\Pi_\disc v(\x))(f^{(n+1)}(\x)-\div(\nabla\bar u^{(n+1)})(\x)-\dr_t\bar u^{(n+1)}(\x)) \ud \x.
\end{aligned}
\end{equation}
For $k=1,...,N$, denote $e^{(k)}:=P_\disc\bar u(t^{(k)})-u^{(k)}$ to introduce
\[
\delta_\disc^{(n+\frac{1}{2})}e=\Big(  \dsp\frac{ \Pi_\disc P_\disc\bar u(t^{(n+1)})-\nabla\bar u^{(n+1)} }{ \delta t^{(n+\frac{1}{2})} }-\dr_t\bar u^{(n+1)} \Big)+\Big( \dr_t\bar u^{(n+1)}-\delta_\disc^{(n+\frac{1}{2})} u \Big),
\]
and
\[
\nabla_\disc e^{(n+1)}= \Big( \nabla_\disc P_\disc\bar u(t^{(n+1)})-\nabla\bar u^{(n+1)}  \Big) + \Big(\nabla\bar u^{(n+1)}+\nabla_\disc u^{(n+1)}  \Big).
\]
Therefore, from \eqref{eq-proof1}, \eqref{eq-proof2} and \eqref{eq-proof3}, and the upper bound of $C_\disc$, we attain
\begin{align*}
\int_\O \Pi_\disc(v(\x)&-u^{(n+1)}(\x))\delta_\disc^{(n+\frac{1}{2})}e(\x) \ud \x
+\int_\O \nabla_\disc(v(\x)-u^{(n+1)}(\x))\cdot \nabla_\disc e^{(n+1)}(\x) \ud \x\\
&\leq\dsp\int_\O(\bar u^{(n+1)}(\x)-\Pi_\disc v(\x))(f^{(n+1)}(\x)-\div(\nabla\bar u^{(n+1)})(\x)-\dr_t\bar u^{(n+1)}(\x)) \ud \x\\
&+C_3(\delta_\disc+h_\disc) || \nabla_\disc (v-u) ||_{L^2(\O)^d}.
\end{align*}
Apply this estimate to $v=P_\disc \bar u(t^{(n+1)})$, multiply by $\delta t^{(n+\frac{1}{2})}$ and sum over $n=0,...,m-1$ for some $m\in \{ 1,...,N \}$ to deduce
\begin{align*}
\dsp\sum_{n=0}^{m-1} \int_\O \Pi_\disc e^{(n+1)}(\x)\Big[\Pi_\disc e^{(n+1)}(\x)-\Pi_\disc e^{(n)}(\x)\Big] \ud \x
+\dsp\sum_{n=0}^{m-1} \delta t^{(n+\frac{1}{2})} \Big\| \nabla_\disc e^{(n+1)} \Big\|_{L^2(\O)^d}^2\\
\leq \dsp\sum_{n=0}^{m-1} \delta t^{(n+\frac{1}{2})} C_4(\delta t_\disc+h_\disc) \Big\| \nabla_\disc e^{(n+1)} \Big\|_{L^2(\O)^d}
+\dsp\sum_{n=0}^{m-1}\delta t^{(n+\frac{1}{2})}E_\disc^{(n+1)},
\end{align*}
where
\begin{equation}\label{eq-ED}
\begin{aligned}
E_\disc^{(n+1)}:=\dsp\int_\O(\bar u^{(n+1)}(\x)&-P_\disc \bar u(t^{(n+1)}))(f^{(n+1)}(\x)\\
&-\div(\nabla\bar u^{(n+1)})(\x)-\dr_t\bar u^{(n+1)}(\x)) \ud \x.
\end{aligned}
\end{equation}
Using the relation $b(a-b)\geq \frac{1}{2}b^2-\frac{1}{2}a^2$ with $a=\Pi_\disc e^{(n)}(\x)$ and $b=\Pi_\disc e^{(n+1)}(\x)$ and Young's inequality, the above estimate yields (note that $\sum_{n=0}^{m-1}\delta t^{(n+\frac{1}{2})} \leq T$) 
\begin{equation}\label{eq-proof4}
\begin{aligned}
\dsp\int_\O \frac{1}{2}(\Pi_\disc e^{(m)}(\x))^2 \ud \x
&+\dsp\sum_{n=0}^{m-1} \delta t^{(n+\frac{1}{2})} \Big\| \nabla_\disc e^{(n+1)} \Big\|_{L^2(\O)^d}^2 \\
&\leq \frac{1}{2}\dsp\int_\O (\Pi_\disc e^{(0)}(\x))^2 \ud \x
+C_5(\delta t_\disc+h_\disc)^2\\
&+\frac{1}{2}\dsp\sum_{n=0}^{m-1} \delta t^{(n+\frac{1}{2})}\Big\| \nabla_\disc e^{(n+1)} \Big\|_{L^2(\O)^d}^2\\
&+\dsp\sum_{n=0}^{m-1}\delta t^{(n+\frac{1}{2})}E_\disc^{(n+1)}.
\end{aligned}
\end{equation}
Since $u^{(0)}= J_\disc u_{\ini}=J_\disc\bar u(0)$, from \eqref{eq-s} and \eqref{eq-error-assumption}, we get
\begin{align*}
|| \Pi_\disc e^{(0)} ||_{L^2(\O)} &\leq || \Pi_\disc P_\disc \bar u(0)-\bar u(0) ||_{L^2(\O)}
+|| \bar u(0)-\Pi_\disc J_\disc\bar u(0) ||_{L^2(\O)}\nonumber\\
&\leq C_4h_\disc + I_\disc^0.
\end{align*}
From the definitions \eqref{eq-err-PD} and \eqref{eq-ED}, we can see that there exists $C_F$ not depending on $\disc$, such that, $E_\disc^{(n+1)} \leq C_F e_\disc^P$. Substitute this estimate in Equation \eqref{eq-proof4} to obtain 
\begin{equation}\label{eq-proof5}
\begin{aligned}
\frac{1}{2} \Big|| \Pi_\disc e^{(m)} \Big||_{L^2(\O)}^2
&+\frac{1}{2}\dsp\sum_{n=0}^{m-1} \delta t^{(n+\frac{1}{2})} \Big\| \nabla_\disc e^{(n+1)} \Big\|_{L^2(\O)^d}^2\\
&\leq C_6(\delta t_\disc+h_\disc + I_\disc^0)^2
+\dsp\sum_{n=0}^{m-1}\delta t^{(n+\frac{1}{2})}C_F e_\disc^P.
\end{aligned}
\end{equation}
Introduce $\Pi_\disc P_\disc \bar u(t^{(m)})$, use a triangle inequality and \eqref{eq-error-assumption} and \eqref{eq-proof5} to get, for all $m=1,...,N-1$,
\begin{equation}\label{eq-proof6}
\begin{aligned}
\Big\| \Pi_\disc u^{(m)}-\bar u(t^{(m)}) \Big\|_{L^2(\O)}^2
&\leq C_6(\delta t_\disc+h_\disc + I_\disc^0)
+S_\disc(\bar u(t^{(m)}))\\
&+\dsp\sum_{n=0}^{m-1}\delta t^{(n+\frac{1}{2})}C_F e_\disc^P\\
&\leq
C_7(\delta t_\disc+h_\disc+I_\disc^0)+\left({\dsp\sum_{n=0}^{m-1}\delta t^{(n+\frac{1}{2})}C_F e_\disc^P}\right)^{\frac{1}{2}}.
\end{aligned}
\end{equation}
Similarly, introduce $\nabla_\disc P_\disc \bar u(t^{(m)})$, use a triangle inequality and \eqref{eq-error-assumption} and \eqref{eq-proof5} with $m=N-1$ to get the following estimation
\begin{equation}\label{eq-proof7}
\begin{aligned}
\dsp\sum_{n=0}^{N-1}\delta t^{(n+\frac{1}{2})} \Big\| \nabla_\disc u^{(n+1)}&-\nabla\bar u(t^{(n+1)}) \Big\|_{L^2(\O)^d}^2\\
&\leq 4C_8(\delta t_\disc+h_\disc+I_\disc^0)^2
+4\dsp\sum_{n=0}^{N-1}\delta t^{(n+\frac{1}{2})}S_\disc(\bar u(t^{(n+1)}))^2\\
&+\dsp\sum_{n=0}^{m-1}\delta t^{(n+\frac{1}{2})}C_F e_\disc^P\\
&\leq C_9^2 (\delta t_\disc+h_\disc+I_\disc^0)^2+\dsp\sum_{n=0}^{m-1}\delta t^{(n+\frac{1}{2})}C_F e_\disc^P.
\end{aligned}
\end{equation}
The desired estimates are implied by combing \eqref{eq-proof6} and \eqref{eq-proof7}, together with the Lipschitz--continuity of $\bar u : [0,T] \to H^1(\O)$, which is used to estimate the quantities $\bar u(t)-\bar u(t^{(n+1)})$ and $\nabla\bar u(t)-\nabla\bar u(t^{(n+1)})$ when $t \in (t^{(n)}, t^{(n+1)}]$. 
\end{proof}

\begin{remark}
The assumption of existence of the interpolant $P_\disc$ can always be satisfied. For PDEs problems, it is shown that there is an explicit linear interpolant $P_\disc$ such that \eqref{eq-error-assumption} holds, see \cite[Step 1 in the proof of Theorem 5.3]{S1}. While this interpolant is no longer valid to preserve the bound by the obstacle $\psi$ on a particular method, it is possible for specific methods to construct more appropriate $P_\disc$. For instance, the interpolant introduced in the appendix of \cite{YD-2018} will work in the case of hybrid mimetic mixed method.
\end{remark}

\begin{remark}
Theorem \ref{theorem-error} provides order of convergence in terms of $h_\disc$ and $\delta t_\disc$. It is explained in \cite{S1} that there exists a constant $C$ depending only on the regularity of mesh such that $h_\disc \leq Ch_\mesh^\alpha$, where $h_\mesh$ is the mesh size and $\alpha$ is the highest degree of the polynomials used to approximate the solution. The error estimates given in the theorem seem to be dominated by the term including $e_\disc^P$, which depends on the choice of the interpolant. Initially, this term seems to behave as $\sqrt h_\mesh$ for the first order conforming and non conforming methods. However, as we show in \cite{AD14}, this term can lead to the expected $\mathcal O(h_\mesh)$ convergence rate for the first order conforming and non conforming methods.
\end{remark}

\begin{remark}
In most numerical schemes, the constructed interpolant of smooth functions might not satisfy the obstacle condition $\psi$ inside the domain, especially if $\psi$ is not constant. It is classical to  consider only approximate obstacle $\psi_\disc \in L^2(\O)$ in the schemes to define the convex set
\begin{equation}\label{set-K}
\cK_{\disc,\psi_\disc}:=\{ v \in X_{\disc,0}: \Pi_\disc \leq \psi_\disc \}.
\end{equation}  
The scheme \eqref{gs-pvi-obs} is therefore modified by replacing the set $\cK_\disc$ by the set $\cK_{\disc,\psi_\disc}$. The error estimate for this case of approximate obstacle is presented in the following theorem, which can exactly be proved as the previous one (Theorem \ref{theorem-error}), see \cite[Section 6]{AD14} for dealing with the approximate obstacle $\psi$.
\end{remark}

\begin{theorem}
Under the assumptions of Theorem \ref{theorem-error}, if $\cK_{\disc,\psi_\disc}$ is not empty then there exists a unique solution $u$ to the gradient scheme \eqref{gs-pvi-obs} in which $\cK_\disc$ has been replaced
with $\cK_{\disc,\psi_\disc}$. Moreover, Estimates \eqref{eq-error} for the approximate obstacle still hold, provided that, $e_\disc^P$ is replaced with
\[
\tilde e_\disc^P:=e_\disc^P+|| \psi^{(n+1)}-\psi_\disc ) ||_{L^2(\O)}.
\]
\end{theorem}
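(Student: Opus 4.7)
The plan is to follow the proof of Theorem \ref{theorem-error} line by line and track the two places where the obstacle $\psi$ enters the argument. First, for existence and uniqueness of the modified scheme, the Stampacchia-type argument outlined in the remark following \eqref{ch5-new-ellptc} carries over with no modification: the bilinear and linear forms $b$ and $L$ are unchanged, and $\cK_{\disc,\psi_\disc}$ is a nonempty closed convex subset of $X_{\disc,0}$ by hypothesis. Existence and uniqueness of $u^{(n+1)}$ at each time step (and hence of the full sequence) follows immediately.

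For the error estimate, the crucial step to revisit is the one where the proof of Theorem \ref{theorem-error} deduces
\[
\int_\O (\Pi_\disc u^{(n+1)}-\psi)(f^{(n+1)}-\div(\nabla\bar u^{(n+1)})-\dr_t\bar u^{(n+1)}) \ud \x \leq 0
\]
from $u^{(n+1)} \in \cK_\disc$ together with the complementarity condition \eqref{pvi-obs1}, since we now only know $\Pi_\disc u^{(n+1)} \geq \psi_\disc$. I would split
\[
\Pi_\disc u^{(n+1)}-\psi = (\Pi_\disc u^{(n+1)}-\psi_\disc)+(\psi_\disc-\psi),
\]
keep the sign-controlled first piece, and bound the second using Cauchy--Schwarz. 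This produces an extra contribution proportional to $\|\psi_\disc-\psi^{(n+1)}\|_{L^2(\O)}\cdot\| f^{(n+1)}-\div(\nabla\bar u^{(n+1)})-\dr_t\bar u^{(n+1)}\|_{L^2(\O)}$, which is precisely what is absorbed into the modified pointwise bound $E_\disc^{(n+1)} \leq C_F \tilde e_\disc^P$ after redefining $E_\disc^{(n+1)}$ to include the obstacle-approximation error.

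The second place where $\psi$ intervenes is in the choice of test function $v = P_\disc \bar u(t^{(n+1)})$, which must now lie in $\cK_{\disc,\psi_\disc}$ rather than in $\cK_\disc$. Following the strategy of \cite[Section 6]{AD14}, one either postulates a $\cK_{\disc,\psi_\disc}$-valued interpolant still satisfying \eqref{eq-error-assumption}, or corrects $P_\disc$ by an additive term whose $L^2$-size and gradient size are both controlled by $\|\psi-\psi_\disc\|_{L^2(\O)}$; this correction contributes only harmless terms of the same form already lumped into $\tilde e_\disc^P$. After that, the computations \eqref{eq-proof1}--\eqref{eq-proof7} reproduce themselves verbatim, because they use only linearity of $P_\disc$, Lipschitz continuity of $\bar u$, the bound on $C_\disc$, and the consistency/conformity quantities $S_\disc$ and $W_\disc$, none of which involve the obstacle.

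The main obstacle I expect is this second point: securing a modified interpolant into $\cK_{\disc,\psi_\disc}$ whose error bound is uniform in $\disc$ and whose excess over $P_\disc$ is controlled by $\|\psi-\psi_\disc\|_{L^2(\O)}$. Once that is in hand, the remainder is pure bookkeeping and the estimates \eqref{eq-error} follow with $e_\disc^P$ systematically replaced by $\tilde e_\disc^P$.
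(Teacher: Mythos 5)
Your proposal is correct and follows essentially the same route as the paper, which itself gives no independent argument but simply asserts that the proof is identical to that of Theorem \ref{theorem-error} with the obstacle-approximation handled as in \cite[Section 6]{AD14}; your splitting $\Pi_\disc u^{(n+1)}-\psi=(\Pi_\disc u^{(n+1)}-\psi_\disc)+(\psi_\disc-\psi)$ in the complementarity step is exactly the modification that produces the extra $\|\psi-\psi_\disc\|_{L^2(\O)}$ term in $\tilde e_\disc^P$. If anything, you are more careful than the paper in flagging that the test function $P_\disc\bar u(t^{(n+1)})$ must now land in $\cK_{\disc,\psi_\disc}$, a point the paper passes over in silence.
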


\section{Numerical results}\label{test}
In this section, we demonstrate the efficiency of Scheme \eqref{gs-pvi-obs} with a particular choice of the gradient discretisation, corresponding to Hybrid Mimetic Mixed (HMM) method. It is a common framework  gathering three different methods: the hybrid finite volume method \cite{D-2010-SUSHI}, the (mixed--hybrid) mimetic finite differences methods \cite{Brezzi-1991}, and the mixed finite volume methods \cite{T1997}. For the sake of completeness we briefly recall the definition of this gradient discretisation. Let $\mathcal T=(\mesh,\edges,\mathcal P)$ be the polytopal mesh of the spatial domain $\O$, where $\mesh$ is the set of polygonal cells $K$, $\edges$ is the set of edges $\edge$, and $\mathcal P$ is a set of points $(x_K)_{K\in\mesh}$. The elements of GD are: 
\begin{enumerate}
\item The discrete space and set are 
\[
X_{\disc,0}=\{ v=((\varphi_{K})_{K\in \mathcal{M}}, (\varphi_{\sigma})_{\sigma \in \mathcal{E}})\;:\; \varphi_{K},\, \varphi_{\sigma} \in \RR\; \varphi_\edge=0,\; \forall \edge \in \edges \cap \dr\O
\},
\]
\[
\cK_\disc=\{ v=((\varphi_{K})_{K\in \mathcal{M}}, (\varphi_{\sigma})_{\sigma \in \mathcal{E}}) \in X_{\disc,0}\;:\; \varphi_K \geq \psi,\; \forall K\in \mesh
\},
\]
where the $x_\edge$ is centre of mass of $\edge$.
\item The non conforming a piecewise affine reconstruction $\Pi_\disc$ is defined by
\[
\begin{aligned}
&\forall \varphi\in X_{\disc,0}, \forall K\in \mesh, \mbox{ for a.e. } \x \in K,\\
&\Pi_\disc \varphi=\varphi_K\mbox{ on $K$},
\end{aligned}
\]
\item The reconstructed gradients is piecewise constant on the cells (broken gradient), defined by
\[
\begin{aligned}
&\forall \varphi\in X_{\disc,0},\; \forall K\in\mathcal M,\,\forall \sigma\in\mathcal E_K,\\
&\nabla_\disc \varphi=\nabla_{K}\varphi+
\frac{\sqrt{d}}{d_{K,\sigma}}R_K(\varphi)\mathbf{n}_{K,\sigma} \mbox{ on } D_{K,\edge},
\end{aligned}
\]
where a cell--wise constant gradient $\nabla_K(\varphi)$ and a stabilisation term $R_K(\varphi)$ are respectively defined by:
\[
\nabla_{K}\varphi= \dsp\frac{1}{|K|}\sum_{\sigma\in \edgescv}|\sigma|\varphi_\edge\mathbf{n}_{K,\sigma} \mbox{ and } R_K(\varphi)=(\varphi_\edge - \varphi_K - \nabla_K \varphi\cdot(\centeredge-x_K))_{\edge\in\edges_K}.
\]
in which $d_{K,\edge}$ is the orthogonal distance between $x_K$ and $\edge \in \edges_K$, ${\bf n}_{K,\edge}$ is the unit vector normal to $\edge$ outward to $K$ and $D_{K,\edge}$ is the convex hull of $\edge \cup \{x_K\}$.
\item The interpolant $J_\disc: L^2(\O) \to X_{\disc,0}$ is defined by:
\[
\begin{aligned}
&\forall w \in L^2(\O)\;:\; J_\disc w=((w_K)_{K\in \mesh},(w_\edge)_{\edge\in \edges}),\\
&\forall K\in\mesh,\; w_K=\dsp\frac{1}{|K|}\dsp\int_K w(\x) \ud \x \mbox{ and } \forall \edge\in\edges,\; w_\edge=0.
\end{aligned}
\]
\end{enumerate}

The HMM scheme for \eqref{obs-lineaer} is the gradient scheme \eqref{gs-pvi-obs} written with the above constructed GD. For computation purpose, we will transfer the HMM method into the finite volume formats. Let us define the linear fluxes $u\mapsto F_{K,\sigma}(u)$ (for $K\in\mesh$ and $\sigma\in\edges_K$) by, for all $K \in \mesh$ and all $u,v\in X_{\disc,0}$,
\begin{align*}
\sum_{\sigma \in \mathcal{E}_K}|\sigma| F_{K,\sigma}(u)
(v_K-v_\sigma)={}&\int_K \nabla_\disc u\cdot\nabla_\disc v \ud \x.
\end{align*} 

The HMM method for Model \eqref{pvi-obs} is, for all $K \in \mesh$ and for all $n=0,...,N-1$, the following holds
\begin{subequations}\label{mont-obs}
\begin{equation}
\left(\frac{|K|}{\delta t^{(n+\frac{1}{2})}}\left( u^{(n+1)}-u^{(n)} \right)+\sum_{\sigma \in \mathcal{E}_K} F_{K,\sigma}(u^{(n+1)})- |K|f_K^{(n+1)}\right)(u_K^{(n+1)}-\psi_K)=0,
\end{equation}
\begin{equation}
\frac{|K|}{\delta t^{(n+\frac{1}{2})}}\left( u^{(n+1)}-u^{(n)} \right)+\sum_{\sigma \in \mathcal{E}_K} F_{K,\sigma}(u^{(n)})\geq |K|f_K^{(n+1)},
\end{equation}
\begin{equation}
u_K^{(n+1)} \geq \psi_K,
\end{equation}
\begin{equation}\label{obs-hmm-d}
F_{K,\edge}(u^{(n+1)})+F_{L,\edge}(u^{(n+1)})=0, \quad \forall \sigma \in \edges_K \cap \edges_L, K\neq L,
\end{equation}
\begin{equation}\label{obs-hmm-e}
u_\sigma^{(n+1)}=0, \quad \forall \sigma \in \mathcal E_{\rm ext}.
\end{equation}
\end{subequations}

Solving this non linear model can be expensive. At each time step $t^{(n)}$, a system of inequalities must be solved. We use the monotonicity iterations Algorithm \ref{algo:monoton} detailed in \cite{A-2} to solve this system; the non linearity caused by the inequalities in the model is eliminated and thus we deal, at each of its steps, with a square linear system on unknowns on each iteration. The number of cells is the upper bound of the number of iterations.
\begin{algorithm}
\caption{Monotonicity algorithm}\label{algo:monoton}
\begin{algorithmic}[1]
\State (Only the first time the algorithm is called):

\hspace*{-2em}\begin{minipage}{0.4\linewidth}
Set $\cA^{(0)}=\mesh$, $\cB=\emptyset$\\
and $I={\rm Card}(\cA^{(0)})$
\end{minipage} \Comment{$I$= theoretical bound on the iterations}
\State $u^{(n)}$ being known and $u^{(n+1)}$ is the solution to \eqref{mont-obs} at time step $t^{(n+1)}$
\While{$i\le I$} 
\State $\cA^{(i)}$ and $\cB^{(i)}$ being known, find the solution $u^{(i)}$ to \eqref{obs-hmm-d} and \eqref{obs-hmm-e} with
\begin{subequations}
\begin{align*}
\frac{|K|}{\delta t^{(n+\frac{1}{2})}} u^{(i)}+\sum_{\sigma \in \mathcal{E}_K}F_{K,\sigma}(u^{(i)})=|K|f_K+\frac{|K|}{\delta t^{(n+\frac{1}{2})}}u^{(n)}, &\quad \forall K \in \cA^{(i)}\\
u_K^{(i)} = \psi_K^{(n+1)},&\quad \forall K \in \cB^{(i)}.  
\end{align*}
\end{subequations}
\State Set $\cA^{(i+1)}$=:
\begin{align*}
\{ K\in \cA^{(i)}\; :\; &\frac{|K|}{\delta t^{(n+\frac{1}{2})}} u^{(i)} +\sum_{\sigma \in \edges} F_{K,\sigma}(u^{(i)})
\geq |K|f_K^{(n+1)} + \frac{|K|}{\delta t^{(n+\frac{1}{2})}} u^{(n)} \} \\
&\cup \{ K\in \cB^{(i)}\; :\; u_K^{(i)}\leq \psi_K \}
\end{align*}
\State Set $\cB^{(i+1)}=:$
\begin{align*}
\{ K\in \cB^{(i)}\; :\; &\frac{|K|}{\delta t^{(n+\frac{1}{2})}} u^{(i)} +\sum_{\sigma \in \edges} F_{K,\sigma}(u^{(i)})
\leq |K|f_K^{(n+1)} + \frac{|K|}{\delta t^{(n+\frac{1}{2})}} u^{(n)} \}\\ 
&\cup \{ K\in \cA^{(i)}\; :\; u_K^{(i)}\geq \psi_K \}
\end{align*}
\If {$\cA^{(i+1)}=\cA^{(i)}$ and $\cB^{(i+1)}=\cB^{(i)}$} 
\State {Exit ``while'' loop}
\EndIf
\EndWhile
\State Set $u^{(n+1)}=u^{(i)}$ \Comment{Solution to \eqref{mont-obs} at time step $t^{(n+1)}$}
\State (For next call of Algorithm \ref{algo:monoton}) Set $\cA^{(0)}=\cA^{(i+1)}$ and $\cB^{(0)}=\cB^{(i+1)}$
\end{algorithmic}
\end{algorithm}

Now, we perform two different numerical tests taken from the literature to measure the validity of the error estimates proved in Theorem \ref{theorem-error}. Unlike previous experiments, we conduct our tests on two families of triangular meshes and hexagonal meshes (as in Figure \ref{Fig.test-1-mesh}). In both tests, we consider the model \eqref{pvi-obs}, in which the spatial domain $\O=(-1,1)^2$ and $\Lambda={\bf Id}$. 

\begin{test}\label{test-1}
We consider a test with an analytical solution introduced in \cite{Moon-2007}. Let the final time $T=0.25$, and $\psi \equiv 0$. The non-contact and contact sets are $\O^+:=\{ \x\in \O\; : \; r(t) > s(t) \} \mbox{ and } \O^{-}=\O-\O^+ 
$. In this test, the functions $r,\; s,\; q_1,\; q_2:[0,T] \to \RR^+$ are given by
\[
r(t)=\Big( (x-\frac{1}{3}\cos(4\pi t))^2 + (y-\frac{1}{3}\sin(4\pi t))^2\Big)^{\frac{1}{2}}, \quad s(t)=\frac{1}{3}+0.3 \sin(16\pi t).
\]
\[
q_1(t)=\frac{1}{3}\cos(4\pi t), \quad \mbox{ and } q_2(t)=\frac{1}{3}\sin(4\pi t).
\]
The source term function is
\[
f(\x,t)=\begin{cases}
\hfill 4 \Big( s^2(t)-2r^2(t)-\frac{1}{2}( r^2(t)-s^2(t) ) \left(p(t)+s(t)\dr_t s \right) \Big)&, \; \mbox{ if }\; \x \in \O^+,\\
-4 s^2(t)\left( 1-r^2(t)+ s^2(t) \right)&, \; \mbox{ if }\; \x \in \O^-.
\end{cases}
\]
where $p(t)=(x-q_1(t))q_1^{'}(t) + (y-q_2(t))q_2^{'}(t)$.

The initial and boundary conditions are imposed by the solution $\bar u$, defined by
\[
\bar u(\x,t)=\begin{cases}
\hfill \frac{1}{2} \Big( r^2(t) - s^2(t) \Big)^2&, \; \mbox{ if }\; \x \in \O^+,\\
0&, \; \mbox{ if }\; \x \in \O^0.
\end{cases}
\]

We present in Figure \ref{Fig-test-1-sol} the surface plots of the approximate solutions for the third mesh in each family at the final time $T=0.25$. Table \ref{tab-test-1} details the relative errors on $\bar u$ and $\nabla\bar u$ and the corresponding convergence rate with respect to the mesh size. We use the explicit Euler scheme with a uniform time step $\delta t^{(n+\frac{1}{2})}= \mathcal O(h^2)$ to obtain the results. The observed numerical rates with respect to the mesh size is $1$, which matches the expectation of Theorem \ref{theorem-error} for the HMM method.


\medskip
\begin{figure}[ht]
	\begin{center}
	\includegraphics[scale=0.70]{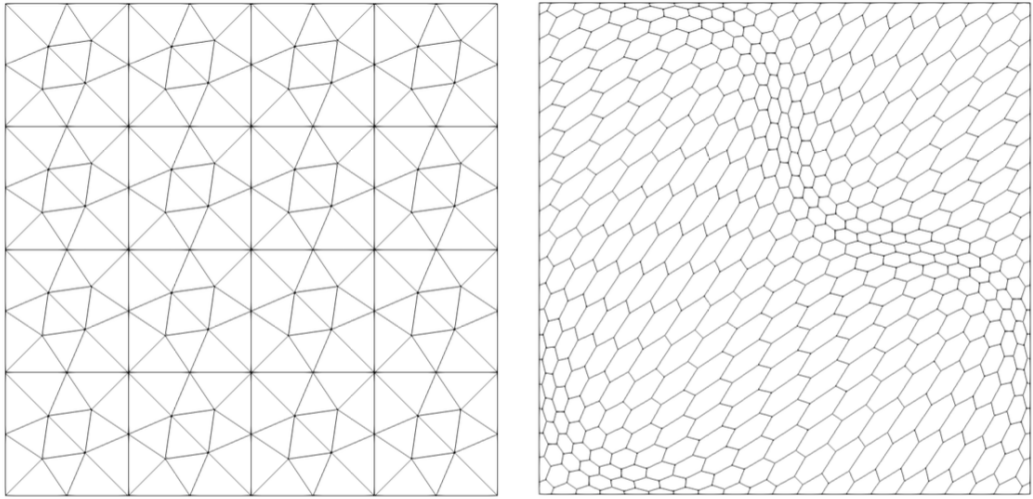}
	\end{center}
	\caption{First two elements in each type of mesh family.}
	\label{Fig.test-1-mesh}
\end{figure}

\medskip
\begin{figure}[ht]
	\begin{center}
	\includegraphics[scale=0.70]{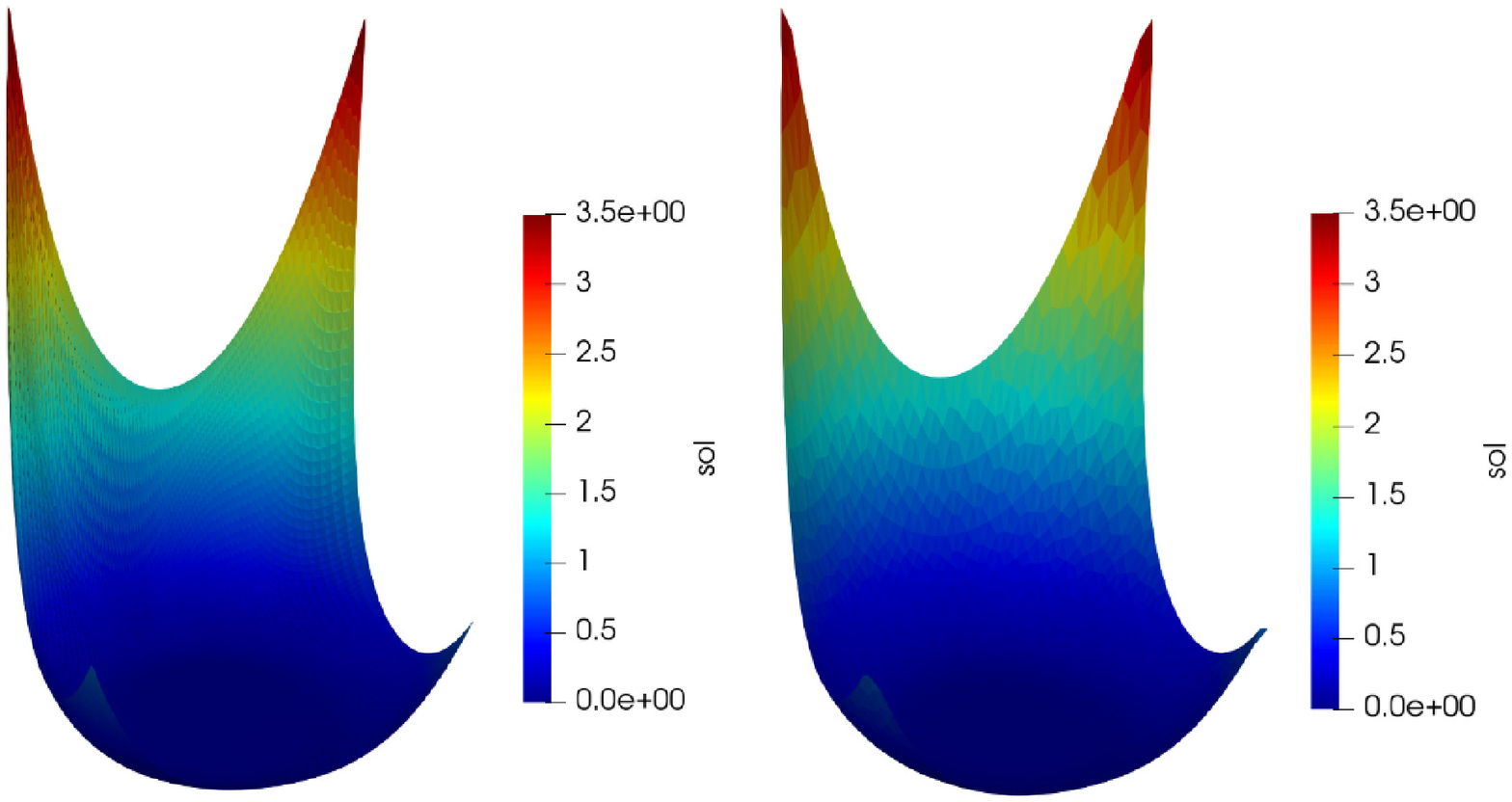}
	\end{center}
	\caption{Test \ref{test-1}. Surface plot of the solution on a hexahedral mesh (left) and on a triangular mesh (right) at final time ($T=0.25$).}
	\label{Fig-test-1-sol}
\end{figure}

\begin{table}[]
\begin{tabular}{c c c c c}
\hline\\
$h$&
$\frac{\| \bar u(\cdot,T) - \Pi_\disc u^N\|_{L^{2}(\O)}}{\|\bar u(\cdot,T)\|_{L^2(\O)}}$&
rate&
$\frac{\| \nabla \bar u(\cdot,T) - \nabla_\disc u^N\|_{L^{2}(\O)^2}}{\|\nabla\bar u(\cdot,T)\|_{L^2(\O)^2}}$&
rate  
\\ \hline
\multicolumn{5}{c}{a hexagonal mesh} \\ 
0.48&
0.14586&
--&
0.22031&
--
\\ 
0.26&
0.06532&
1.29&
0.12743&
0.88
\\ 
0.13&
0.00939&
2.85&
0.06678&
0.95
\\ 
0.07&
0.00440&
1.10&
0.06108&
0.13
\\ \hline
\multicolumn{5}{c}{a triangular mesh} \\ 
 0.25 & 0.13997  & --  & 0.25249  & --  \\ 
 0.18 & 0.05639  & 2.62  & 0.13138  & 1.88  \\ 
 0.09 & 0.01116  & 2.34  & 0.06942  & 0.92  \\ 
 0.03 & 0.00447  & 0.88  & 0.06161  & 0.11 \\
 \hline 
\end{tabular}
\caption{Test \ref{test-1}: relative errors and and convergence rates w.r.t. the mesh size $h$, for uniform time steps $\delta t^{(n+\frac{1}{2})}=h^2$.}
\label{tab-test-1}   
\end{table}
\end{test}

\begin{test}\label{test-2}
We consider our model with particular data as in \cite{PVI-test-2009}; $T=0.1$, $f=-4$ and the barrier function $\psi$ is
\[
\psi(x,y)=\max\{0,-0.1+0.6 \exp(-10 r^2), 0.5 -r\},\quad \mbox{with}\quad r=\sqrt{x^2+y^2}.
\]

Figures \ref{Fig-test-2-sol} and \ref{Fig-test-2-difference} present the HMM solution to the above obstacle problem and the difference function $u-\psi$ computed at the final time step $t=0.1$, respectively

Figure \ref{Fig-test-2-coincidence} displays the coincidence set based. The black area presents the set of cell centers where the approximate solution $u$ reaches the barrier $\psi$. The contact regions are very similar to the ones obtained by the finite difference method in \cite{PVI-test-2009}. For instance, the maximum $y$ ordinate of points $\x \in \O$, where the solution is strictly larger than the obstacle, is located around $y=0.6$.

\par Solving parabolic variational inequalities in practice is more expensive than linear parabolic partial differential equations models. At each time step, we iterate to solve a number of systems of elliptic equations (see Algorithm \ref{algo:monoton}). To determine the initial two sets $\cA$ and $\cB$ introduced in Algorithm \ref{algo:monoton}, we assume that $\cA^{(0)}=\mesh$, that is the solution is everywhere equal to the barrier at initial step. After determining the final $\cA^{(N)}$ and $\cB^{(N)}$ at time $t^{(n)}$, we use these set as initial guess for the monotonicity algorithm at time $t^{(n+1)}$. Given that the solution to the PVI is not expected to move a lot between $t^{(n)}$ and $t^{(n+1)}$, these initial guesses are not far from the correct regions at time $t^{(n+1)}$. As a consequence, the number of iterations is reduced as the time step increases: from $11$ iteration (starting from the guess $I^{(0)}=\mesh$) at $t^{(1)}$ to $2$ iterations at time $t^{(4)}$ and after. The iterations number of the algorithm required to reach the solution at any time step is ranged from 2 to 11 iterations.

\medskip
\begin{figure}[ht]
	\begin{center}
	\includegraphics[scale=0.70]{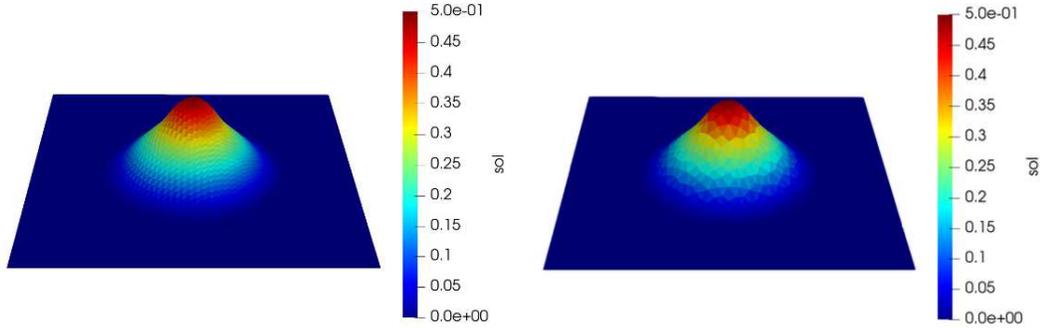}
	\end{center}
	\caption{Test \ref{test-2}. Surface plot of the solution on a hexahedral mesh (left) and on a triangular mesh (right) at final time ($T=0.25$).}
	\label{Fig-test-2-sol}
\end{figure}

\medskip
\begin{figure}[ht]
	\begin{center}
	\includegraphics[scale=0.70]{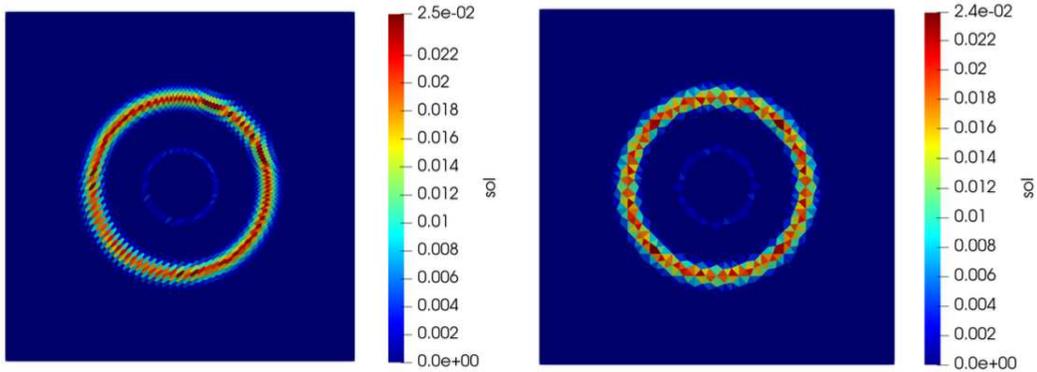}
	\end{center}
	\caption{Test \ref{test-2}. Surface plot of the difference function ($u-\psi$) on a hexahedral mesh (left) and on a triangular mesh (right) at final time ($T=0.25$).}
	\label{Fig-test-2-difference}
\end{figure}

\medskip
\begin{figure}[ht]
	\begin{center}
	\includegraphics[scale=0.70]{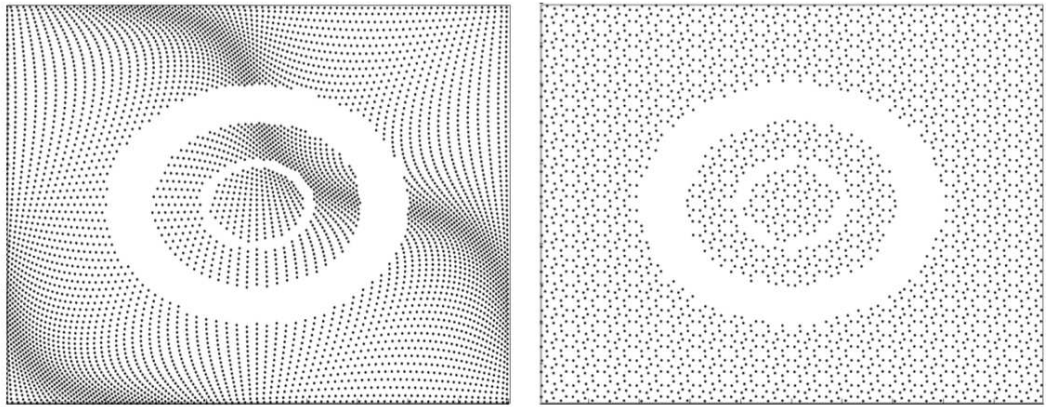}
	\end{center}
	\caption{Test \ref{test-2}. Plot of the coincidence set on a hexahedral mesh (left) and a triangular mesh (right) at final time ($T=0.25$).}
	\label{Fig-test-2-coincidence}
\end{figure}

\end{test}

Figures \ref{Fig-test-1-K-mesh} and \ref{Fig-test-2-K-mesh} represent our experiments obtained when using a "Kershaw" mesh as in the FVCA5 benchmark \cite{HH08}. As expected on these kinds of extremely distorted meshes, the results in both tests seem to be quantitively and qualitatively good. In Test $1$, the relative $L^2$ error on $\bar u$ and $\nabla\bar u$ are respectively $0.017$ and $0.019$. In Test $2$, the coincide region is still captured despite the internal distorted cells.

\medskip
\begin{figure}[ht]
	\begin{center}
	\begin{tabular}{c@{\hspace*{1em}}c}
	\includegraphics[width=.48\linewidth]{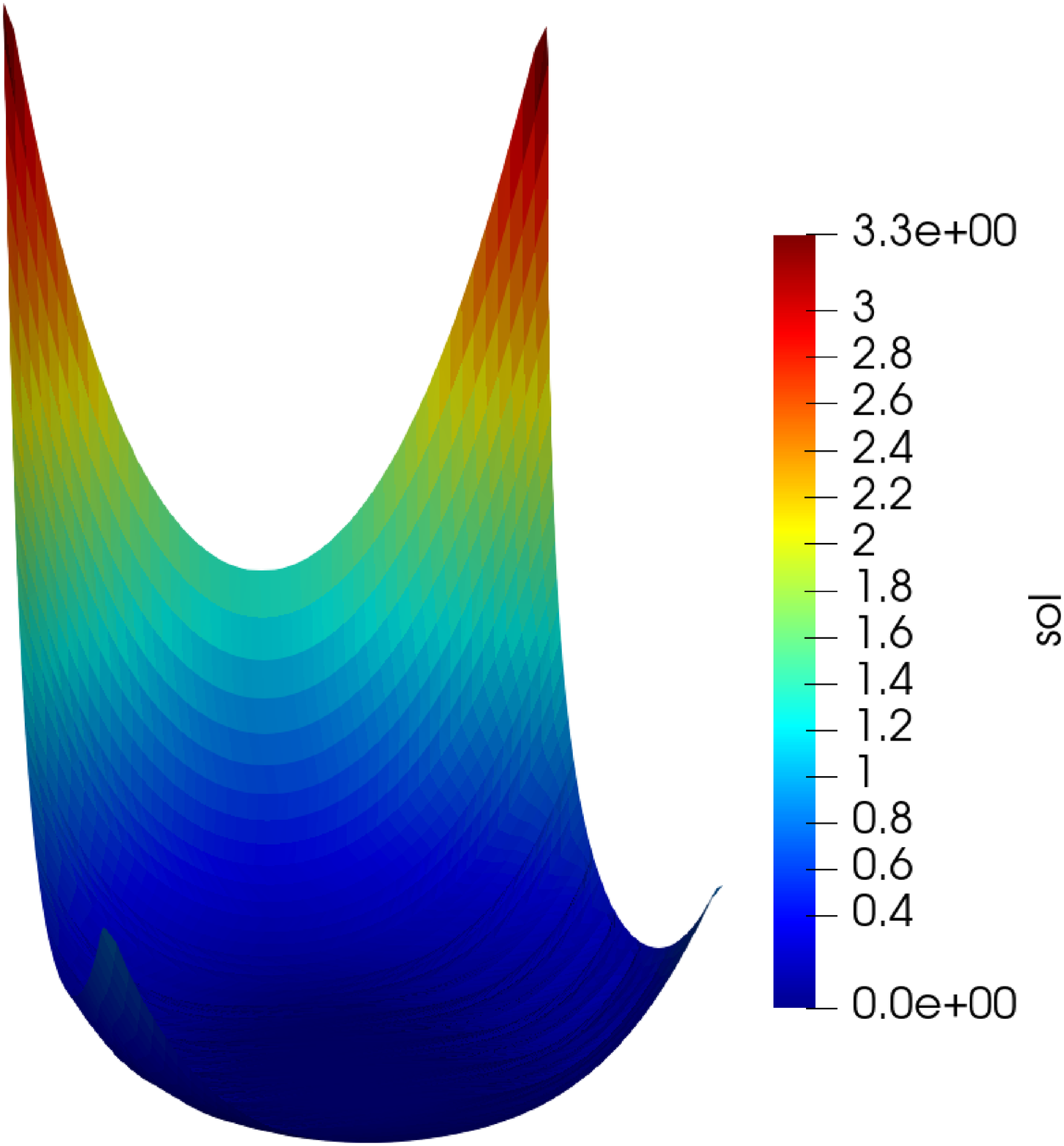} & \includegraphics[width=.48\linewidth]{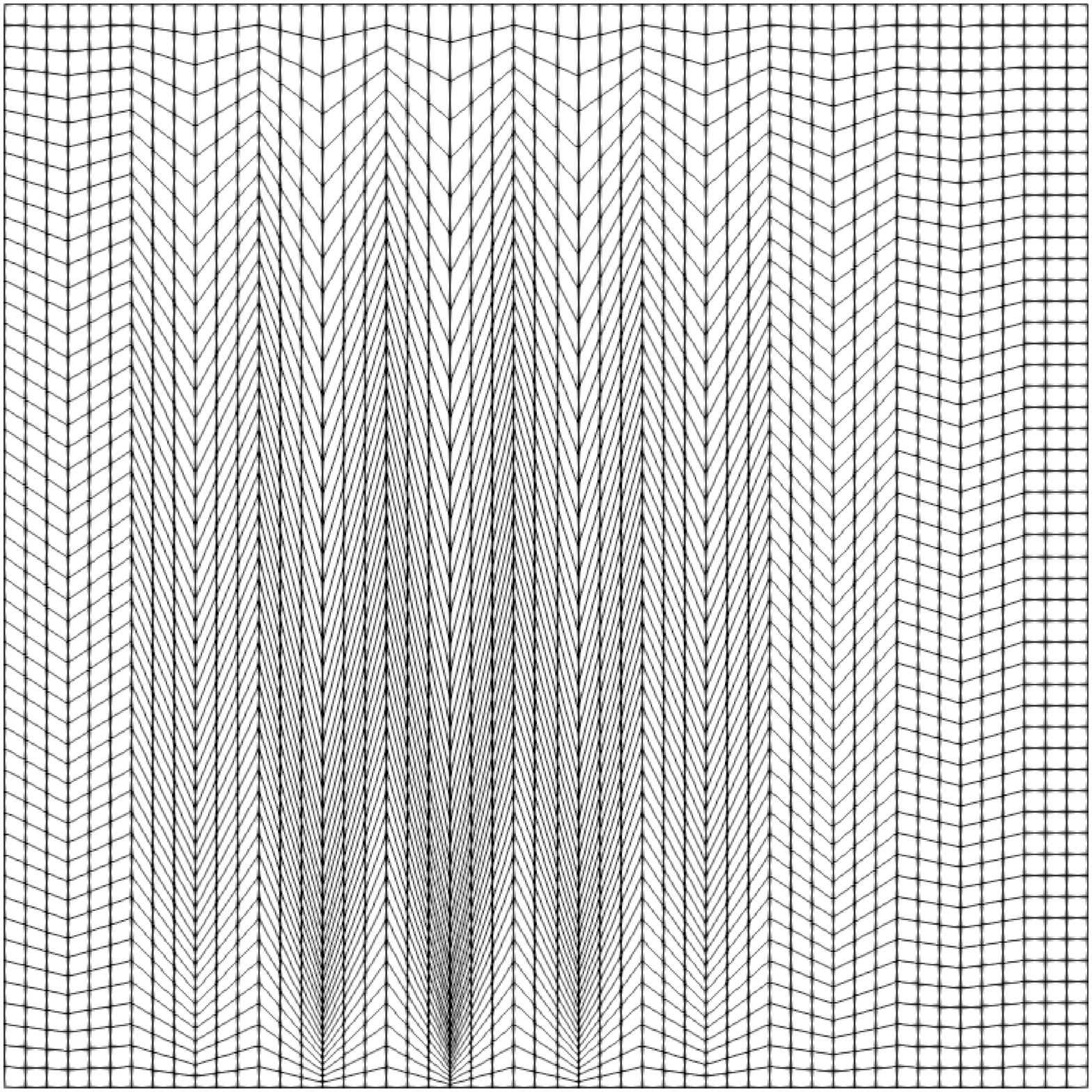}
	\end{tabular}
	\end{center}
	\caption{Test \ref{test-1}. Surface plot of the solution (left) for on a Kershaw mesh (right) at final time ($T=0.25$).}
	\label{Fig-test-1-K-mesh}
\end{figure}

\medskip
\begin{figure}[ht]
	\begin{center}
	\begin{tabular}{c@{\hspace*{1em}}c}
	\includegraphics[width=.48\linewidth]{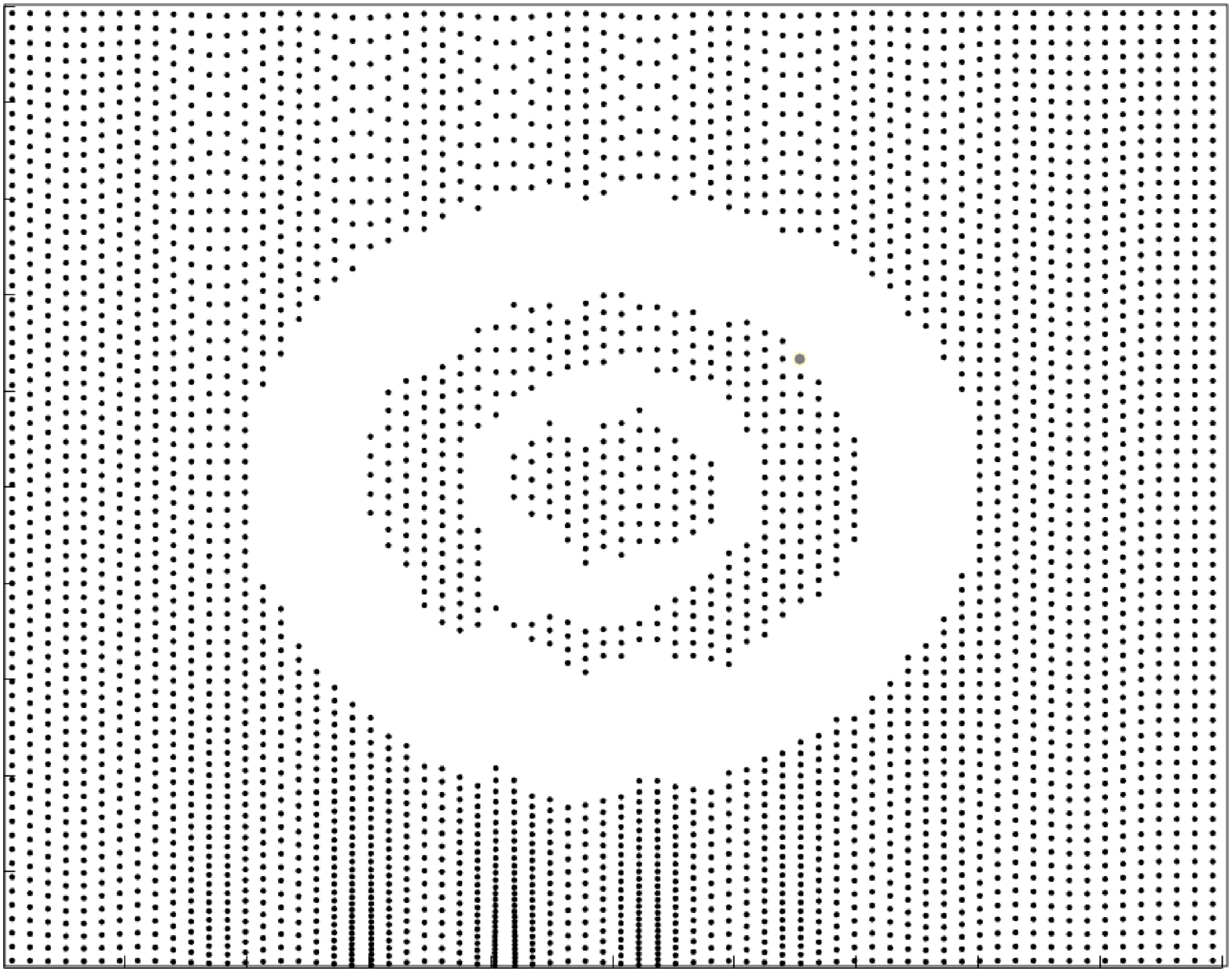} & \includegraphics[width=.48\linewidth]{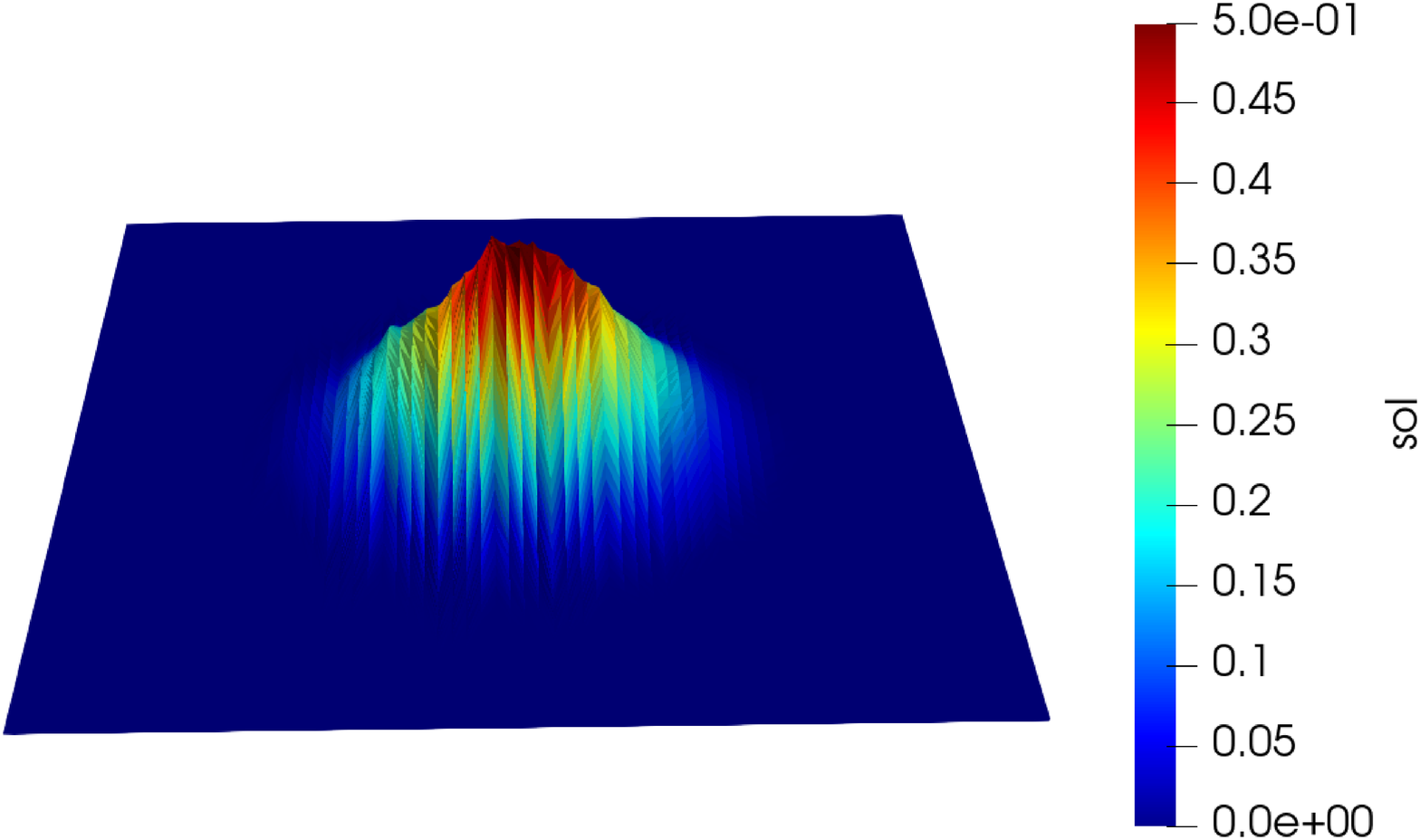}
	\end{tabular}
	\end{center}
	\caption{Test \ref{test-2}. Surface plot of the solution (left) for on a Kershaw mesh (right) at final time ($T=0.25$).}
	\label{Fig-test-2-K-mesh}
\end{figure}


\bibliographystyle{siam}
\bibliography{pvi-ref}

\end{document}
